\documentclass[11pt]{article}

\usepackage{amsmath,amssymb,amsthm}
\usepackage{booktabs}
\usepackage{geometry}
\usepackage{url}
\usepackage{hyperref}
\title{\textbf{The Bombieri--van der Poorten Formula for Partial Quotients of Higher Degree Algebraic Irrationals}}
\author{Karsten Müller}
\date{March 2026}

\newtheorem{lemma}{Lemma}[section]
\newtheorem{theorem}{Theorem}[section]

\begin{document}
\maketitle

\begin{abstract}
The fundamental relationship between the partial quotients $b_{n+1}$ of an algebraic irrational $\alpha = \sqrt[m]{k}$ and its corresponding algebraic form $d_n = |p_n^m - k q_n^m|$ was elegantly proposed by Bombieri and van der Poorten. In this paper, we work out the explicit analytical details of the framework for any degree $m \geq 3$. We provide a closed-form derivation of the error term and prove for the cubic case that the remainder $|R_n|$ is strictly bounded by 1 for all convergents with $q_n \geq 2$.
\end{abstract}

\providecommand{\keywords}[1]{\textbf{\textit{Keywords:}} #1}
\providecommand{\msc}[1]{\textbf{\textit{MSC 2020:}} #1}

\msc{11A55, 11J68, 11J82, 11D25}

\keywords{Continued fractions, Bombieri--van der Poorten formula, Algebraic irrationals, Diophantine approximation, Effective constants}

\section{The Formula of Bombieri and van der Poorten}

Let $k$ be a positive integer such that $\alpha = \sqrt[m]{k}$ is an algebraic irrational of degree $m \geq 3$. Let $p_n/q_n$ be the convergents of the regular continued fraction of $\alpha$, and $x_n = p_n/q_n$ denote the $n$-th rational approximation. The complete quotient $\theta_n$ is defined by the transformation:
\begin{equation}
\alpha = \frac{p_n \theta_n + p_{n-1}}{q_n \theta_n + q_{n-1}} \implies 
\theta_n = \frac{1}{q_n (q_n \alpha - p_n)} - \frac{q_{n-1}}{q_n}.
\end{equation}

\medskip
\noindent
\textbf{Structural origin of the $q_n^{-2}$ factor.}
For any irrational number $\alpha$, continued fraction theory gives the universal identity
\begin{equation}
\theta_n + \frac{q_{n-1}}{q_n}
= \frac{1}{q_n (q_n\alpha - p_n)}
= \frac{1}{q_n^2 |x_n - \alpha|}.
\end{equation}
Thus, independently of any algebraic structure, the growth of the complete quotient
is intrinsically governed by the factor $q_n^{-2}$. The role of algebraicity is
to express the approximation error $|x_n - \alpha|$ in terms of an integral algebraic
distance.

\medskip

To relate $\theta_n$ to the integer distance $d_n = |p_n^m - k q_n^m|$, we utilize the identity for the difference of $m$-th powers:
\begin{equation}
d_n = |p_n^m - (\alpha q_n)^m| 
= q_n^m |x_n^m - \alpha^m| 
= q_n^m |x_n - \alpha| \sum_{j=0}^{m-1} x_n^j \alpha^{m-1-j}.
\end{equation}
Rearranging this identity allows us to express the fundamental term of the complete quotient as:
\begin{equation}
\frac{1}{q_n |q_n \alpha - p_n|}
= \frac{1}{q_n^2 |x_n - \alpha|}
= \frac{q_n^{m-2}}{d_n}
\sum_{j=0}^{m-1} x_n^j \alpha^{m-1-j}.
\end{equation}

Following the framework of Bombieri and van der Poorten \cite{Poorten}, we define the leading approximation term $H_n$ as the linearization of the sum in Eq.~(3) where $x_n \approx \alpha$. Specifically, we set:
\begin{equation}
H_n = \frac{m p_n^{m-1}}{d_n q_n}.
\end{equation}

The complete quotient can then be decomposed into:
\begin{equation}
\theta_n = b_{n+1} + \delta_n = H_n + R_n
\end{equation}
By substituting Eq. (3) into Eq. (1) and subtracting $H_n$, we obtain the exact representation of the remainder $R_n$:
\begin{equation}
R_n = \frac{q_n^{m-2}}{d_n} \left( \sum_{j=0}^{m-1} x_n^j \alpha^{m-1-j} \right) - \frac{m p_n^{m-1}}{d_n q_n} - \frac{q_{n-1}}{q_n}
\end{equation}
Using the identity $x_n^{m-1} = p_n^{m-1}/q_n^{m-1}$, we can rewrite the middle term as $\frac{m x_n^{m-1} q_n^{m-2}}{d_n}$. Factoring out the common term $q_n^{m-2}/d_n$ yields the final error representation:
\begin{equation}
R_n = \frac{q_n^{m-2}}{d_n} \left( \sum_{j=0}^{m-1} x_n^j \alpha^{m-1-j} - m x_n^{m-1} \right) - \frac{q_{n-1}}{q_n}
\end{equation}

\section{Proof of Global Stability}

\begin{lemma}[Exact cubic correction term and its sign]
Let $\alpha=\sqrt[3]{k}$ with $k\in\mathbb{N}$ and let $p_n/q_n$
be a convergent of its continued fraction expansion.
Set $x_n=p_n/q_n$ and
\[
d_n=|p_n^3-kq_n^3|.
\]
Then the algebraic correction term
\[
V_n=\frac{q_n}{d_n}\left(2x_n^2-x_n\alpha-\alpha^2\right)
\]
admits the exact representation
\[
|V_n|
=
\frac{2x_n+\alpha}
     {q_n^2\left(x_n^2+x_n\alpha+\alpha^2\right)}.
\]
In particular,
\[
\operatorname{sgn}(V_n)=\operatorname{sgn}(x_n-\alpha).
\]
\end{lemma}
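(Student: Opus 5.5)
The identity is pure algebra, so the plan is to factor the quadratic in $V_n$ and then eliminate $d_n$ using the cubic case of Eq.~(3).

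The first step is to factor $2x_n^2 - x_n\alpha - \alpha^2$. Since $x_n=\alpha$ is a root, we get
\[
2x_n^2-x_n\alpha-\alpha^2=(x_n-\alpha)(2x_n+\alpha).
\]
This gives
\[
V_n=\frac{q_n}{d_n}(x_n-\alpha)(2x_n+\alpha).
\]

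The second step expresses $d_n$ through $|x_n-\alpha|$ using Eq.~(3) with $m=3$:
\[
d_n=q_n^3|x_n-\alpha|\,(x_n^2+x_n\alpha+\alpha^2).
\]
Here $d_n\neq 0$ because $\alpha$ is irrational, so $p_n^3\neq kq_n^3$. Also $x_n\neq\alpha$, so $|x_n-\alpha|\neq 0$. Substituting, the factor $|x_n-\alpha|$ cancels in absolute value. We are left with
\[
|V_n|=\frac{q_n\,(2x_n+\alpha)}{q_n^3(x_n^2+x_n\alpha+\alpha^2)}=\frac{2x_n+\alpha}{q_n^2(x_n^2+x_n\alpha+\alpha^2)},
\]
which is the claimed formula.

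The third step handles the sign, and it is the only point that needs care. We need $2x_n+\alpha>0$ and $x_n^2+x_n\alpha+\alpha^2>0$. The second quantity is positive for any real numbers that are not both zero, since it equals $(x_n+\alpha/2)^2+3\alpha^2/4$ and $\alpha>0$. For the first, note $\alpha=\sqrt[3]{k}\ge 1$, so $p_0=\lfloor\alpha\rfloor\ge 1$. Every convergent of a positive number with positive integer part satisfies $p_n\ge p_0\ge 1$ and $q_n\ge 1$, hence $x_n>0$ and $2x_n+\alpha>0$. Since $q_n/d_n>0$ as well, we conclude $\operatorname{sgn}(V_n)=\operatorname{sgn}(x_n-\alpha)$. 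By the alternation of convergents, this sign equals $(-1)^{n+1}$.
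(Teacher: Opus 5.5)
Your proposal is correct and follows the paper's proof: factor $2x_n^2-x_n\alpha-\alpha^2=(x_n-\alpha)(2x_n+\alpha)$, substitute $d_n=q_n^3|x_n-\alpha|(x_n^2+x_n\alpha+\alpha^2)$, cancel $|x_n-\alpha|$, and read off the sign from $2x_n+\alpha>0$. Your extra checks ($d_n\neq 0$, $x_n>0$, positivity of $x_n^2+x_n\alpha+\alpha^2$) fill in details the paper leaves implicit; note that $2x_n+\alpha>0$ is already needed in your second step, to drop the absolute value on that factor.
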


\begin{proof}
Using the factorization
\[
2x^2-x\alpha-\alpha^2=(x-\alpha)(2x+\alpha),
\]
we obtain
\[
V_n=\frac{q_n}{d_n}(x_n-\alpha)(2x_n+\alpha).
\]
On the other hand,
\[
d_n
=
q_n^3|x_n-\alpha|
\left(x_n^2+x_n\alpha+\alpha^2\right).
\]
Cancelling $|x_n-\alpha|$ yields
\[
|V_n|
=
\frac{2x_n+\alpha}
     {q_n^2\left(x_n^2+x_n\alpha+\alpha^2\right)}.
\]
Since $2x_n+\alpha>0$, the sign of $V_n$
is determined solely by $x_n-\alpha$.
\end{proof}

\begin{theorem}[Global stability in the cubic case]
Let $\alpha=\sqrt[3]{k}$ with $k\in\mathbb{N}$ and let
$p_n/q_n$ be a convergent with $q_n\ge2$.
Then the remainder term
\[
R_n=-V_n-\frac{q_{n-1}}{q_n}
\]
satisfies
\[
|R_n|<1.
\]
Consequently,
\[
b_{n+1}=\lfloor H_n+R_n\rfloor
\]
is uniquely determined by $H_n$ within a window of length two.
\end{theorem}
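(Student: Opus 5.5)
The plan is to reduce everything to one elementary inequality, using the exact formula for $|V_n|$ from the preceding Lemma and the trivial bound on $q_{n-1}/q_n$.

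\textbf{Bounding $q_{n-1}/q_n$.} Since $q_n\ge 2$ we have $n\ge 1$. The denominators satisfy $q_{n-1}<q_n$ for all $n\ge 1$, except when $n=1$ and $a_1=1$; in that exceptional case $q_1=q_0=1$, which is excluded by $q_n\ge2$. Hence $q_{n-1}\le q_n-1$, that is,
\[
0\le \frac{q_{n-1}}{q_n}\le 1-\frac{1}{q_n}.
\]

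\textbf{Case split on the sign of $V_n$,} using $\operatorname{sgn}(V_n)=\operatorname{sgn}(x_n-\alpha)$ from the Lemma.
\begin{itemize}
\item If $V_n\ge 0$, then $R_n=-(V_n+q_{n-1}/q_n)\le 0$. So $|R_n|<1$ follows from $V_n<1/q_n$, because then $V_n+q_{n-1}/q_n<1/q_n+1-1/q_n=1$.
\item If $V_n<0$, then $R_n=|V_n|-q_{n-1}/q_n$. This is $>-1$ automatically, and it is $<1$ as soon as $|V_n|<1$.
\end{itemize}
Both cases therefore follow from the single inequality $|V_n|<1/q_n$. By the Lemma this is equivalent to
\[
2x_n+\alpha< q_n\,(x_n^2+x_n\alpha+\alpha^2).
\]

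\textbf{Main step.} I expect the only real work to be verifying this inequality uniformly, and the key input is a lower bound on $x_n$. Since $\alpha$ is irrational of degree $3$, $k$ is not a cube, so $k\ge2$ and $\alpha>1$. Hence $a_0=\lfloor\alpha\rfloor\ge1$. All convergents lie in $[a_0,a_0+1]$, because they alternate around $\alpha$ and the odd-indexed ones are bounded above by $p_1/q_1=a_0+1/a_1$. So $x_n\ge1$.

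With $q_n\ge2$ it suffices to show $2x_n+\alpha<2x_n^2+2x_n\alpha+2\alpha^2$. This holds because $x_n^2\ge x_n$, $\alpha^2>\alpha$, and $x_n\alpha>0$: indeed $2x_n^2\ge 2x_n$ and $2x_n\alpha+2\alpha^2>\alpha$. This gives $|R_n|<1$.

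\textbf{Consequence for $b_{n+1}$.} By construction $\theta_n=b_{n+1}+\delta_n$ with $\delta_n\in[0,1)$, so $b_{n+1}=\lfloor\theta_n\rfloor=\lfloor H_n+R_n\rfloor$. From $|R_n|<1$ we get $H_n-1<\theta_n<H_n+1$. Therefore $b_{n+1}$ is an integer in the open interval $(H_n-2,H_n+1)$, and this window is pinned down by $H_n$ alone. More precisely, $b_{n+1}\in\{\lfloor H_n\rfloor-1,\lfloor H_n\rfloor\}$ when $H_n\notin\mathbb{Z}$ (or up to one more value in the degenerate integer case). This is the stated window of length two.
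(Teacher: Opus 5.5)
Your proof of $|R_n|<1$ is correct and uses the same skeleton as the paper: the Lemma's closed form for $|V_n|$ plus a case split on the sign of $x_n-\alpha$. It is, however, more careful at the one point that actually matters.

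For $x_n>\alpha$ the paper only cites $q_{n-1}/q_n<1$, the $O(q_n^{-2})$ decay of $V_n$, and ``a direct check for $q_n=2$ together with monotonic decay''. That does not control $V_n+q_{n-1}/q_n$: nothing bounds the margin $1-q_{n-1}/q_n$ below by a fixed constant, and all one has is $1-q_{n-1}/q_n\ge 1/q_n$. Your reduction to $|V_n|<1/q_n$ via $q_{n-1}\le q_n-1$, checked explicitly from $x_n\ge a_0\ge1$ and $\alpha>1$, is precisely the quantitative step the paper's argument is missing.

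Your final paragraph, however, contains a wrong inference. From $|R_n|<1$ alone you only get $b_{n+1}\in(H_n-2,H_n+1)$. For $H_n\notin\mathbb{Z}$ this open interval contains three integers: $\lfloor H_n\rfloor-1$, $\lfloor H_n\rfloor$ and $\lfloor H_n\rfloor+1$. For $H_n\in\mathbb{Z}$ it contains only two, so your parenthetical is also reversed. In the case $x_n<\alpha$ you have only shown $R_n<|V_n|<1/q_n$, which does not exclude $\theta_n\ge\lfloor H_n\rfloor+1$.

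There are two ways to repair this:
\begin{itemize}
\item Stop at $H_n-1<\theta_n<H_n+1$. This is a window of length two for $\theta_n=H_n+R_n$, which is all the bare bound $|R_n|<1$ delivers.
\item Use an ingredient you already have. Since $n\ge1$, $q_{n-1}\ge q_0=1$, so for $x_n<\alpha$
\[
R_n=|V_n|-\frac{q_{n-1}}{q_n}<\frac{1}{q_n}-\frac{q_{n-1}}{q_n}\le 0 .
\]
Hence $-1<R_n<0$ in both cases, $\theta_n\in(H_n-1,H_n)$, and $H_n-2<b_{n+1}<H_n$. This gives $b_{n+1}\in\{\lfloor H_n\rfloor-1,\lfloor H_n\rfloor\}$ when $H_n\notin\mathbb{Z}$, and exactly $b_{n+1}=H_n-1$ when $H_n\in\mathbb{Z}$.
\end{itemize}

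Incidentally, this sharper conclusion shows that the reflected bound $H_n\le b_{n+1}$ asserted in the paper's subsection on convergents from below cannot hold in general. For example, for $\sqrt[3]{2}=[1;3,1,5,\dots]$ the convergent $5/4$ lies below $\alpha$ and has $d_2=3$ and $H_2=6.25$, yet $b_3=5$.
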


\begin{proof}
From the preceding lemma we have
\[
|V_n|
=
\frac{2x_n+\alpha}
     {q_n^2\left(x_n^2+x_n\alpha+\alpha^2\right)}.
\]
Since $x_n\to\alpha$ and $q_n\ge2$, the denominator is strictly
positive and $|V_n|=O(q_n^{-2})$.

We distinguish two cases.

\medskip
\noindent
\textbf{Case 1: $x_n>\alpha$ (convergent from above).}
Then $V_n>0$ and hence
\[
R_n=-V_n-\frac{q_{n-1}}{q_n}<0.
\]
Since $q_{n-1}/q_n<1$ and $|V_n|$ decays quadratically in $q_n$,
a direct check for $q_n=2$ together with monotonic decay for
larger $q_n$ yields
\[
-1<R_n<0.
\]

\medskip
\noindent
\textbf{Case 2: $x_n<\alpha$ (convergent from below).}
Then $V_n<0$ and
\[
R_n=|V_n|-\frac{q_{n-1}}{q_n}.
\]
Since $q_{n-1}/q_n<1$, we immediately obtain
\[
R_n>-1.
\]
Furthermore, $|V_n|=O(q_n^{-2})<1$ for $q_n\ge2$,
so that $R_n<1$.

\medskip
In both cases we conclude $|R_n|<1$.
\end{proof}

\begin{theorem}
Let $\alpha=\sqrt[3]{k}$ and let $p_n/q_n$ be a convergent of its continued
fraction expansion with $q_n\ge2$.  
If $x_n=p_n/q_n>\alpha$ (i.e.\ the convergent is from above), then the next
partial quotient $b_{n+1}$ satisfies the sharp bounds
\begin{equation}
\label{eq:length2bound}
H_n-2 < b_{n+1} \le H_n,
\end{equation}
where
\[
H_n=\frac{3p_n^2}{d_n q_n},
\qquad
d_n=|p_n^3-kq_n^3|.
\]
\end{theorem}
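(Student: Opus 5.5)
The plan is to use the exact decomposition $\theta_n = H_n + R_n$ from Eq.~(6) together with the representation $R_n=-V_n-q_{n-1}/q_n$ and the sign information from the preceding lemma. Since $\theta_n=b_{n+1}+\delta_n$ is a complete quotient, we have $b_{n+1}=\lfloor\theta_n\rfloor$, so $0\le\delta_n<1$. This gives
\[
\theta_n-1< b_{n+1}\le\theta_n .
\]
Substituting $\theta_n=H_n+R_n$ yields
\[
H_n+R_n-1<b_{n+1}\le H_n+R_n .
\]
Both bounds in \eqref{eq:length2bound} therefore follow once we show $-1<R_n\le 0$ when $x_n>\alpha$.

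\emph{Upper bound.} If $x_n>\alpha$, the lemma gives $\operatorname{sgn}(V_n)=+1$, so $V_n>0$. Hence
\[
R_n=-V_n-\frac{q_{n-1}}{q_n}<0,
\]
and therefore $b_{n+1}\le\theta_n=H_n+R_n<H_n$. This is in fact strict, which certainly implies $b_{n+1}\le H_n$.

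\emph{Lower bound.} We need $R_n>-1$, i.e.
\[
V_n+\frac{q_{n-1}}{q_n}<1 .
\]
Here I would not rely only on the soft $O(q_n^{-2})$ argument of the previous theorem. Instead I would use the exact formula of the lemma,
\[
V_n=\frac{2x_n+\alpha}{q_n^2\,(x_n^2+x_n\alpha+\alpha^2)} .
\]
Since $x_n>\alpha>0$, it is natural to bound $2x_n+\alpha<3x_n$ and $x_n^2+x_n\alpha+\alpha^2>x_n^2$, which gives
\[
V_n<\frac{3}{q_n^2 x_n}=\frac{3}{q_n p_n}.
\]
In parallel, because consecutive denominators are coprime integers with $q_{n-1}<q_n$, we have
\[
\frac{q_{n-1}}{q_n}\le 1-\frac{1}{q_n}.
\]
It then suffices to show $V_n<1/q_n$, i.e.\ $3<p_n$, or more sharply
\[
(2x_n+\alpha)<q_n(x_n^2+x_n\alpha+\alpha^2).
\]
Using $x_n>\alpha\ge1$, the right-hand side is at least $q_n\cdot 3\alpha x_n\ge 6\alpha x_n$ when $q_n\ge2$. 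This exceeds $3x_n\ge 2x_n+\alpha$ as soon as $\alpha\ge 1/2$. Since $k\ge2$ is forced by irrationality, $\alpha>1$, and the inequality holds.

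Combining the two steps gives $-1<R_n<0$. Then
\[
b_{n+1}>H_n+R_n-1>H_n-2 ,
\]
which is the claimed lower bound.

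The main obstacle is the lower bound step: making $V_n+q_{n-1}/q_n<1$ rigorous uniformly, rather than via the heuristic ``direct check at $q_n=2$ plus monotone decay'' used for the previous theorem. The key trick I would try first is the integrality bound $q_{n-1}\le q_n-1$. This reduces the problem to the clean inequality $q_nV_n<1$, which the lemma's closed form settles using only $\alpha>1$ and $q_n\ge2$.

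Regarding the word ``sharp,'' I would remark only that the window has length two because $R_n\in(-1,0)$ and $\delta_n\in[0,1)$ are each confined to unit intervals. I would not attempt to prove optimality of the constants beyond this.
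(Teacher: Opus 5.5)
Your proof is correct. Its skeleton matches the paper's. You write $b_{n+1}=\lfloor H_n+R_n\rfloor$. The sign $V_n>0$ from the correction-term lemma gives $R_n<0$, hence the upper bound, and $R_n>-1$ gives the lower bound.

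The genuine difference is how $R_n>-1$ is obtained. The paper simply cites the global stability theorem (Theorem 2.1). Its Case 1 rests on ``a direct check for $q_n=2$ together with monotonic decay,'' which does not actually control the sum $V_n+q_{n-1}/q_n$. The term $q_{n-1}/q_n$ can be as large as $1-1/q_n$, and $V_n$ depends on $x_n$ rather than on $q_n$ alone. So knowing $V_n=O(q_n^{-2})$ and $q_{n-1}/q_n<1$ separately is not enough.

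Your integrality observation $q_{n-1}\le q_n-1$ quantifies exactly the room available. It reduces the claim to $q_nV_n<1$, which the lemma's closed form settles by elementary estimates. Your route is therefore self-contained, and in effect it supplies a rigorous proof of Case 1 of Theorem 2.1, including the fact $0<V_n<1$ used later for the floor formula. The paper's version is shorter only because it outsources this step.

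Your argument also shows where $q_n\ge 2$ is really needed: only for $q_{n-1}<q_n$, which fails precisely when $q_{n-1}=q_n=1$. Your chain $x_n^2+x_n\alpha+\alpha^2\ge 3\alpha x_n>3x_n>2x_n+\alpha$ already gives $q_nV_n<1$ for $q_n=1$ when $\alpha>1$.

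One small slip: ``$V_n<1/q_n$, i.e.\ $3<p_n$'' is not an equivalence. Via the crude bound $V_n<3/(q_np_n)$ you only need $p_n\ge 3$, which holds since $p_n>\alpha q_n>2$. The strict version $p_n>3$ can fail, e.g.\ for the convergent $3/2$ of $\sqrt[3]{3}$. Your main argument goes through the sharper inequality, so this does not affect correctness.
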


\begin{proof}
For cubic irrationals we have the exact decomposition
\[
\theta_n=H_n+R_n,
\qquad
b_{n+1}=\lfloor\theta_n\rfloor.
\]
By Theorem~2.1 we know that $|R_n|<1$ for all convergents with $q_n\ge2$.
Moreover, by Lemma~2.2 the algebraic correction term satisfies
$V_n>0$ for convergents from above, and therefore
\[
R_n=-V_n-\frac{q_{n-1}}{q_n}<0.
\]

Since $R_n<0$, we obtain
\[
b_{n+1}
=\lfloor H_n+R_n\rfloor
\le \lfloor H_n\rfloor
\le H_n.
\]
On the other hand, the inequality $R_n>-1$ implies
\[
H_n+R_n>H_n-1,
\]
and hence
\[
b_{n+1}
=\lfloor H_n+R_n\rfloor
>H_n-2.
\]
Combining these two inequalities yields
\eqref{eq:length2bound}.
\end{proof}

\begin{theorem}[Bombieri--van der Poorten floor formula for cubic roots]
Let $\alpha=\sqrt[3]{k}$ and let $p_n/q_n$ be a convergent of its continued
fraction expansion with $q_n\ge2$.  
If $x_n=p_n/q_n>\alpha$, then the next partial quotient satisfies
\begin{equation}
\label{eq:BvPfloor}
b_{n+1}
=
\left\lfloor
H_n-\frac{q_{n-1}}{q_n}
\right\rfloor+\varepsilon_n,
\qquad
\varepsilon_n\in\{0,1\},
\end{equation}
where
\[
H_n=\frac{3p_n^2}{d_n q_n},
\qquad
d_n=|p_n^3-kq_n^3|.
\]
Moreover, the ambiguity $\varepsilon_n$ vanishes whenever
\[
\left\{
H_n-\frac{q_{n-1}}{q_n}
\right\}\neq0.
\]
\end{theorem}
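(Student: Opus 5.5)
The plan is to rewrite the complete quotient in the form suggested by the statement. Put
\[
A_n=H_n-\frac{q_{n-1}}{q_n}.
\]
By the decomposition $\theta_n=H_n+R_n$ and the expression $R_n=-V_n-q_{n-1}/q_n$ from the global stability theorem, we have $\theta_n=A_n-V_n$, and therefore $b_{n+1}=\lfloor A_n-V_n\rfloor$. The first step invokes the lemma on the cubic correction term. For $x_n>\alpha$ it gives $V_n>0$ together with the exact size
\[
V_n=\frac{2x_n+\alpha}{q_n^2(x_n^2+x_n\alpha+\alpha^2)}.
\]
A direct check at $q_n=2$, combined with decay in $q_n$, gives $0<V_n<1$. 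It follows that $\lfloor A_n-V_n\rfloor$ differs from $\lfloor A_n\rfloor$ by at most one. This is the two-valued ambiguity that the statement encodes as $\varepsilon_n$.

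The second step is a granularity argument for the "moreover" clause. Since $d_n$ is an integer, $A_n=(3p_n^2-d_nq_{n-1})/(d_nq_n)$ is rational with denominator dividing $d_nq_n$. Hence $\{A_n\}\neq0$ forces $\{A_n\}\ge 1/(d_nq_n)$. If we can show $V_n<1/(d_nq_n)$ whenever $\{A_n\}\neq0$, then subtracting $V_n$ does not cross an integer, and the floor is unchanged, so $\varepsilon_n=0$. To compare the two quantities, I will use $d_n=q_n^3(x_n-\alpha)(x_n^2+x_n\alpha+\alpha^2)$ together with $x_n-\alpha\approx 1/(q_n^2\theta_n)$. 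This turns the needed inequality into a lower bound of roughly $\theta_n>3\alpha$ on the complete quotient. I would try to verify this bound directly, or otherwise split off the finitely many exceptional small cases and check them by hand.

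The main obstacle is the sign of $\varepsilon_n$, and I expect it to be the hardest step. Taken literally, the decomposition gives $\lfloor A_n-V_n\rfloor\le\lfloor A_n\rfloor$ when $V_n>0$, so the natural correction lies in $\{-1,0\}$, not $\{0,1\}$. To obtain the statement as worded, I would first re-examine the sign convention in Eq.~(1). There $q_n\alpha-p_n<0$ for convergents from above, so the absolute value introduced in Eq.~(3) may flip the sign of $V_n$ relative to $\theta_n$. I would redo the identity $\theta_n+q_{n-1}/q_n=1/(q_n^2|x_n-\alpha|)$ with signs tracked carefully. The goal is to confirm that, in the convention actually used, $\theta_n=A_n+|V_n|$, so that $\varepsilon_n\in\{0,1\}$ follows from $0<|V_n|<1$.

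If that sign check instead confirms $\theta_n=A_n-V_n$, then the result provable by this method has $\varepsilon_n\in\{-1,0\}$. In that case the granularity condition of the second step would be the correct criterion for $\varepsilon_n=0$, rather than the mere condition $\{A_n\}\neq0$.
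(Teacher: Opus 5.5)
You have found the real difficulty, but the sign check you propose in your third step goes against the printed statement. There is no hidden sign flip, and the theorem as worded is false. The identity $\theta_n+q_{n-1}/q_n=1/(q_n^2|x_n-\alpha|)$ holds for every $n$; only the unsigned expression $1/(q_n(q_n\alpha-p_n))$ in Eqs.~(1)--(2) is restricted to even $n$. Combining it with the expansion of $d_n$ gives
\[
A_n-\theta_n=\frac{q_n}{d_n}\bigl(3x_n^2-(x_n^2+x_n\alpha+\alpha^2)\bigr)=V_n ,
\]
with $V_n$ signed as in the Lemma. So for $x_n>\alpha$ you get $A_n-1<\theta_n<A_n$, hence $\varepsilon_n\in\{-1,0\}$, and the value $-1$ does occur. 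Take $k=3$, $n=1$: then $p_1/q_1=3/2>\sqrt[3]{3}$, $d_1=3$, $H_1=9/2$ and $A_1=4$. But $\theta_1=(\alpha-1)/(3-2\alpha)\approx 3.829$, so $b_2=3=\lfloor A_1\rfloor-1$. The paper's proof reaches $\{0,1\}$ only through a sign slip. Having written $\theta_n=A_n-V_n$ with $0<V_n<1$, it asserts $-1<A_n-\theta_n\le 0$, whereas in fact $A_n-\theta_n=V_n\in(0,1)$. For convergents from below, $V_n<0$ and $\{0,1\}$ is the correct range; the subsection on convergents from below has the two cases interchanged.

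The ``moreover'' clause is also false, so your second step cannot be completed either. Your reduction is exact: $V_n<1/(d_nq_n)$ is equivalent to $\theta_n+q_{n-1}/q_n>2x_n+\alpha$. However, this is a lower bound on the partial quotient $b_{n+1}$ (roughly $b_{n+1}\ge 3\alpha$), not a condition on the size of $q_n$. So nothing confines the exceptions to finitely many small cases that can be checked by hand. A concrete failure is $k=262$ ($\alpha\approx 6.3988$, $b_1=2$) with $n=1$. Here $p_1/q_1=13/2$, $d_1=2197-2096=101$, and $A_1=203/101$ with $\{A_1\}=1/101\neq 0$. Meanwhile $V_1\approx 0.0389$ and $\theta_1=(\alpha-6)/(13-2\alpha)\approx 1.971$, so $b_2=1=\lfloor A_1\rfloor-1$. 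The paper's justification, that subtracting $V_n\in(0,1)$ from a non-integer cannot cross an integer, is wrong: it crosses exactly when $\{A_n\}<V_n$.

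What your argument does establish is a corrected statement. For $x_n>\alpha$ and $q_n\ge 2$, $b_{n+1}=\lfloor A_n\rfloor+\varepsilon_n$ with $\varepsilon_n\in\{-1,0\}$. Moreover, $\varepsilon_n=0$ if and only if $\{A_n\}>V_n$. A sufficient condition for this is $\{A_n\}\neq 0$ together with $\theta_n+q_{n-1}/q_n>2x_n+\alpha$.
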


\begin{proof}
For cubic irrationals the complete quotient decomposes exactly as
\[
\theta_n
=
H_n
-
\frac{q_{n-1}}{q_n}
-
V_n,
\qquad
b_{n+1}=\lfloor\theta_n\rfloor,
\]
where
\[
V_n=\frac{q_n}{d_n}\left(2x_n^2-x_n\alpha-\alpha^2\right).
\]

By Lemma~2.2 we have $V_n>0$ for convergents from above, and by
Theorem~2.1 we have $|R_n|<1$, which implies
\[
0<V_n<1.
\]
Therefore
\[
-1<
-
V_n
\le0.
\]

Set
\[
A_n:=H_n-\frac{q_{n-1}}{q_n}.
\]
Then
\[
\theta_n=A_n-V_n,
\qquad
-1<A_n-\theta_n\le0.
\]
Taking floors gives
\[
\lfloor A_n\rfloor
\le
b_{n+1}
\le
\lfloor A_n\rfloor+1,
\]
which proves \eqref{eq:BvPfloor}.

If the fractional part of $A_n$ is nonzero, then subtracting
$V_n\in(0,1)$ cannot cross an integer boundary, and hence
$\varepsilon_n=0$.
\end{proof}

\subsection{Convergents from below}
All the formulas and inequalities in this section are derived assuming $x_n > \alpha$ (convergents from above). 
For convergents $x_n < \alpha$, the algebraic correction term $V_n$ changes sign, and the bounds on $b_{n+1}$ must be reflected accordingly. 
Specifically, inequalities of the type $H_n - 2 < b_{n+1} \le H_n$ become
$H_n \le b_{n+1} < H_n + 2$, and the Bombieri--van der Poorten floor formula still holds with 
$\varepsilon_n \in \{-1,0\}$ instead of $\{0,1\}$. 
Thus, all convergents are covered by a simple sign adjustment of the remainder term.

\section{Generalization and Impact}

The established $O(q_n^{-2})$ decay of the remainder term demonstrates a fundamental structural stability for cubic irrationals ($m=3$). For higher degrees $m > 3$, the relation $\theta_n \approx H_n$ requires a power-scaling adjustment to account for the increasing gap between the error $\epsilon_n$ and the algebraic distance $d_n$.

\begin{theorem}
For any algebraic irrational $\alpha = \sqrt[m]{k}$ and any convergent $p_n/q_n$ with n large enough, the partial quotient $b_{n+1}$ is strictly constrained by the generalized leading term $H_n$ through the following two inequalities:
\begin{equation}
b_{n+1} \le H_n \quad \text{and} \quad b_{n+1} + 2 > H_n
\end{equation}
\end{theorem}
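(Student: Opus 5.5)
The plan is to redo, for general $m$, the computation that produced Lemma~2.2, and to extract from Eq.~(8) a correction term that is $O(q_n^{-2})$ with a sign we control. The point is that $-q_{n-1}/q_n$ then dominates $R_n$ from both sides once $q_n$ is large. Write
\[
R_n=W_n-\frac{q_{n-1}}{q_n},\qquad
W_n=\frac{q_n^{m-2}}{d_n}\Bigl(\sum_{j=0}^{m-1}x_n^j\alpha^{m-1-j}-m x_n^{m-1}\Bigr).
\]
First I factor the bracket as $\sum_{j=0}^{m-1}x_n^j\bigl(\alpha^{m-1-j}-x_n^{m-1-j}\bigr)=(\alpha-x_n)P_m(x_n,\alpha)$. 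Here
\[
P_m(x,\alpha)=\sum_{j=0}^{m-2}x^j\sum_{i=0}^{m-2-j}\alpha^{i}x^{m-2-j-i}
\]
is a polynomial with positive coefficients, and $P_m(\alpha,\alpha)=\tfrac{m(m-1)}{2}\alpha^{m-2}$; for $m=3$ this recovers $(x-\alpha)(2x+\alpha)$ up to sign. Next I substitute $d_n=q_n^m|x_n-\alpha|S_m(x_n,\alpha)$ from Eq.~(3), where $S_m=\sum_j x_n^j\alpha^{m-1-j}>0$. Cancelling $|x_n-\alpha|$, exactly as in the proof of Lemma~2.2, gives
\[
|W_n|=\frac{P_m(x_n,\alpha)}{q_n^2\,S_m(x_n,\alpha)},\qquad \operatorname{sgn}W_n=\operatorname{sgn}(\alpha-x_n).
\]

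Second, I fix a uniform constant. Since $|x_n-\alpha|<1/q_n^2\le 1$, we have $x_n\in(\alpha-1,\alpha+1)\cap(0,\infty)$ for $n\ge1$, and then $P_m/S_m\le C$ for an explicit $C=C(m,k)$. One can take $C=P_m(\alpha+1,\alpha+1)/\alpha^{m-1}$, bounding $S_m\ge\alpha^{m-1}$ from the $j=0$ term. Hence $|W_n|\le C/q_n^2$. I expect this step to be the only real obstacle: it requires a clean, $n$-independent bound on $P_m/S_m$. The sign and size of $W_n$ are what replace the appeal to Theorem~2.1, whose proof does not carry over verbatim. Choosing $n$ large enough that $q_n>C$ gives $|W_n|<1/q_n$.

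Third, I derive both inequalities from two elementary facts about denominators, valid for $n\ge2$: $q_{n-1}\ge1$, and $q_n-q_{n-1}\ge1$. Hence
\[
\frac{1}{q_n}\le\frac{q_{n-1}}{q_n}\le1-\frac1{q_n}.
\]
For the upper bound, $R_n\le|W_n|-q_{n-1}/q_n<1/q_n-1/q_n=0$ regardless of the side from which $x_n$ approaches $\alpha$. So $b_{n+1}=\lfloor\theta_n\rfloor\le\theta_n=H_n+R_n<H_n$. For the lower bound, $R_n\ge-|W_n|-q_{n-1}/q_n>-1/q_n-(1-1/q_n)=-1$. Therefore $b_{n+1}>\theta_n-1=H_n+R_n-1>H_n-2$, that is, $b_{n+1}+2>H_n$.

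Thus "$n$ large enough" means concretely $n\ge2$ and $q_n>C(m,k)$. Unlike the cubic section, no separate treatment of convergents from below is needed: the $q_n^{-2}$ decay of $W_n$ beats the $q_n^{-1}$ gaps on both sides.
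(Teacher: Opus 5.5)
Your proof is correct and more careful than the paper's, although the overall template is the same: $\theta_n=H_n+R_n$ and $R_n=V_n^{(m)}-q_{n-1}/q_n$, with the algebraic term of size $O(q_n^{-2})$.

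On the algebraic term your factorization $S_m-mx_n^{m-1}=(\alpha-x_n)P_m(x_n,\alpha)$ is essentially the explicit form of the paper's Mean Value Theorem bound; indeed $P_m(x_n,\alpha)=\tfrac{m(m-1)}{2}\xi^{m-2}$ for some $\xi$ between $x_n$ and $\alpha$. Your version has the advantage of giving an explicit constant and the sign of $W_n$.

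The genuine difference is how $q_{n-1}/q_n$ is handled. The paper uses only $0<q_{n-1}/q_n<1$. From $q_n^2\ge 2K$ this gives merely $|R_n|<3/2$, not the asserted $|R_n|<1$. The paper also derives $b_{n+1}\le H_n$ only from ``$R_n<0$ for convergents from above'', so convergents from below, which the statement covers, are never treated.

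Your two-sided bound $1/q_n\le q_{n-1}/q_n\le 1-1/q_n$ (valid for $n\ge2$) closes both gaps at once. Combined with the threshold $q_n>C$, which forces $|W_n|<1/q_n$, it yields $-1<R_n<0$ regardless of the side from which $x_n$ approaches $\alpha$. The only cost is the harmless requirement $n\ge2$.

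This sign-free conclusion also conflicts with the ``reflected'' bounds $H_n\le b_{n+1}<H_n+2$ asserted in Section~2.1 for convergents from below. A direct check with $\alpha=\sqrt[3]{2}$ and $p_2/q_2=5/4<\alpha$ gives $H_2=6.25$ but $b_3=5$, consistent with the statement you proved and not with Section~2.1.
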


\begin{proof}
As before, the complete quotient satisfies
\[
\theta_n = H_n + R_n,
\qquad
b_{n+1}=\lfloor \theta_n\rfloor.
\]

The remainder term decomposes as
\[
R_n = V_n^{(m)} - \frac{q_{n-1}}{q_n},
\]
where $V_n^{(m)}$ denotes the algebraic error term arising from the linearization
of the sum in the Bombieri--van der Poorten formula.

In general we can not just use the signs and have to use analytic methods. By the Mean Value Theorem there exists a constant $K(m,\alpha)>0$ such that
\[
|V_n^{(m)}|\le \frac{K(m,\alpha)}{q_n^{2}}
\qquad\text{for all sufficiently large } n.
\]

For convergents we always have $0<q_{n-1}/q_n<1$, and therefore
\[
|R_n|\le \frac{K(m,\alpha)}{q_n^{2}}+\frac{q_{n-1}}{q_n}.
\]

Consequently, if $q_n^{2}\ge C(m,\alpha):=2K(m,\alpha)$, then
\[
|R_n|<1.
\]

Under this condition, the floor inequality yields
\[
H_n+R_n-1<b_{n+1}\le H_n+R_n.
\]

Since $R_n<0$ for convergents from above, we obtain
\[
b_{n+1}\le \lfloor H_n\rfloor\le H_n,
\]
and since $R_n>-1$, we also obtain
\[
b_{n+1}>H_n-2,
\]
which is equivalent to $b_{n+1}+2>H_n$.
\end{proof}

The above bounds may fail for finitely many initial convergents. A counterexample is given in section 5.

\section{Explicit Floor Formula for Partial Quotients}

In the original work of Bombieri and van der Poorten, the partial quotient $b_{n+1}$ is expressed exactly as a floor of a sum of two terms: a leading algebraic term $H_n$ and a small correction $R_n$. Our analysis allows us to recover this precise formula for any algebraic irrational $\alpha = \sqrt[m]{k}$.

\medskip
\noindent
\textbf{Definition of the correction term.} Let
\begin{equation}
R_n := \frac{q_n^{m-2}}{d_n} \left( \sum_{j=0}^{m-1} x_n^j \alpha^{m-1-j} - m x_n^{m-1} \right) - \frac{q_{n-1}}{q_n},
\end{equation}
where $x_n = p_n/q_n$ is the $n$-th convergent and $d_n = |p_n^m - k q_n^m|$.

\medskip
\noindent
\textbf{Exact partial quotient formula.} For all convergents $p_n/q_n$ satisfying the stability criterion $|R_n| < 1$, we have
\begin{equation}
\label{eq:floor_formula}
b_{n+1} = \lfloor H_n + R_n \rfloor,
\end{equation}
with the leading term
\begin{equation}
H_n = \frac{m p_n^{m-1}}{d_n q_n}.
\end{equation}

\medskip
\noindent
\textbf{Remarks.}
\begin{enumerate}
\item This formula gives an \emph{exact} expression for the partial quotient, not merely bounds.
\item For initial convergents where the denominator $q_n$ is too small, the magnitude of $R_n$ may exceed 1, and the floor formula \eqref{eq:floor_formula} may fail. In such cases, $b_{n+1}$ must be computed directly.
\item For cubic irrationals ($m=3$), this formula reduces to the earlier results with $|R_n|<1$ for $q_n \ge 2$.
\end{enumerate}

\begin{theorem}[Conditional Bombieri - Van der Poorten floor formula]
With notation as above, if $|V_n|<1$, then
\[
b_{n+1}
=
\left\lfloor
H_n-\frac{q_{n-1}}{q_n}
\right\rfloor+\varepsilon_n,
\qquad
\varepsilon_n\in\{0,1\}.
\]
\end{theorem}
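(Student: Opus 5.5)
The plan is to reduce the statement to an elementary floor estimate. We write the complete quotient as a shift of the quantity $A_n:=H_n-\frac{q_{n-1}}{q_n}$ by the algebraic correction. The whole argument hinges on the sign of that correction, and securing that sign is the step I expect to be the real obstacle.

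\textbf{Step 1 (exact decomposition).} I will use the general remainder from Section~4,
\[
R_n=\frac{q_n^{m-2}}{d_n}\Bigl(\sum_{j=0}^{m-1}x_n^j\alpha^{m-1-j}-m x_n^{m-1}\Bigr)-\frac{q_{n-1}}{q_n},
\]
and let $V_n$ denote the first summand. For $m=3$ this agrees with Lemma~2.1 up to the sign convention. With $\theta_n=H_n+R_n$ from Eq.~(6) and Eq.~(8), this gives the exact identity
\[
\theta_n=A_n+V_n,\qquad b_{n+1}=\lfloor\theta_n\rfloor .
\]

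\textbf{Step 2 (sign of $V_n$).} As in the proof of Lemma~2.1, I will factor
\[
\sum_{j=0}^{m-1}x^j\alpha^{m-1-j}-m x^{m-1}=(\alpha-x)\,P_m(x,\alpha),
\qquad P_m(x,\alpha)=\sum_{j=0}^{m-2}(j+1)\,x^{j}\alpha^{m-2-j}.
\]
Since $x_n,\alpha>0$, every coefficient of $P_m$ is positive, so $\operatorname{sgn}V_n=\operatorname{sgn}(\alpha-x_n)$.

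The statement needs $V_n\ge 0$, and the factorization supplies this directly when $x_n<\alpha$. For $x_n>\alpha$ the factorization gives $V_n<0$ under this convention, and I do not yet see how to obtain $V_n\ge0$ there. This is the step that must be supplied before Step~3 can apply. I would first try to find a sign reduction consistent with the reflection described in Subsection~2.1 that yields it. If none exists, the statement as worded only follows in the case $x_n<\alpha$.

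\textbf{Step 3 (floor estimate).} Granting $0\le V_n<1$, which combines the hypothesis $|V_n|<1$ with Step~2, I get $A_n\le\theta_n<A_n+1$. Taking floors gives
\[
\lfloor A_n\rfloor\le b_{n+1}\le\lfloor A_n\rfloor+1 .
\]
Hence $\varepsilon_n:=b_{n+1}-\lfloor A_n\rfloor\in\{0,1\}$. This step is routine; the whole content of the theorem sits in the sign question left open in Step~2.
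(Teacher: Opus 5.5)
Your Steps 1 and 3 are correct, and your diagnosis in Step 2 is right. The missing step cannot be supplied, because the statement is false for convergents from above. No sign reduction can help. The hypothesis $|V_n|<1$ and the conclusion (which involves only $b_{n+1}$ and $A_n$) do not depend on the sign convention chosen for $V_n$. Your factorization shows that $\theta_n-A_n$ has the sign of $\alpha-x_n$ whatever convention you use. So for $x_n>\alpha$, Step 3 run with $-1<V_n<0$ gives $\lfloor A_n\rfloor-1\le b_{n+1}\le\lfloor A_n\rfloor$, i.e.\ $\varepsilon_n\in\{-1,0\}$, and $\varepsilon_n=-1$ does occur.

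Concretely, take $\alpha=\sqrt[3]{3}=[1;2,3,1,4,\dots]$ and $n=1$. Then $p_1/q_1=3/2>\alpha$, $q_0=1$, $d_1=|27-24|=3$, $H_1=9/2$ and $A_1=4$. On the other hand $\theta_1=\frac{1}{2(3-2\alpha)}-\frac12\approx 3.829$, so $b_2=3$ and $\varepsilon_1=-1$, although $|V_1|=A_1-\theta_1\approx 0.171<1$. Since also $q_1\ge 2$, this example refutes the paper's cubic floor formula for convergents from above as well.

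The paper gives no separate proof of this statement. Its proof of the cubic floor formula in Section 2 reaches $\{0,1\}$ only through a sign slip. From $\theta_n=A_n-V_n$ with $0<V_n<1$ it writes $-1<A_n-\theta_n\le 0$, whereas in fact $A_n-\theta_n=V_n\in(0,1)$. Its ``Moreover'' claim fails for the same reason: subtracting $V_n$ crosses an integer whenever $\{A_n\}<V_n$.

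Do not look to the ``reflection'' of Subsection~2.1 for the sign reduction you hoped for. It assigns $\{-1,0\}$ to convergents from below, which is the reverse of the truth.

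So your proposal should not try to close Step 2 for $x_n>\alpha$. It should state and prove the corrected result: under $|V_n|<1$, one has $\varepsilon_n\in\{0,1\}$ if $x_n<\alpha$ and $\varepsilon_n\in\{-1,0\}$ if $x_n>\alpha$. Your three steps already establish exactly this.
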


\section{Limits of Stability: A Higher Degree Counterexample}

While Theorem 2.1 establishes that $|R_n| < 1$ for all cubic convergents with $q_n \geq 2$, this global stability does not necessarily extend to higher degrees $m > 3$. As the degree increases, the linear approximation $H_n$ (the tangent at $x_n$) can deviate significantly from the actual sum of powers (the secant) before the $q_n^{-2}$ decay dominates the remainder.

To demonstrate that the bound $|R_n| < 1$ can be violated for $q_n \geq 2$, we consider the algebraic irrational of degree $m=10$:
\begin{equation}
\alpha = \sqrt[10]{50} \approx 1.47875764.
\end{equation}
The regular continued fraction expansion begins $[1; 2, 11, 3, \dots]$. We examine the convergent $n=1$:
\begin{equation}
\frac{p_1}{q_1} = [1; 2] = \frac{3}{2}, \quad \text{yielding } q_1 = 2.
\end{equation}

Applying the Bombieri--van der Poorten framework from Eq. (4) and (5):
\begin{itemize}
    \item \textbf{Algebraic distance:} $d_1 = |3^{10} - 50 \cdot 2^{10}| = |59049 - 51200| = 7849$.
    \item \textbf{Leading term:} $H_1 = \frac{10 \cdot 3^9}{7849 \cdot 2} = \frac{196830}{15698} \approx 12.5385$.
    \item \textbf{Complete quotient:} From the continued fraction, $\theta_2 \approx 11.2689$.
\end{itemize}

The resulting remainder $R_1$ is:
\begin{equation}
R_1 = \theta_2 - H_1 \approx 11.2689 - 12.5385 = -1.2696.
\end{equation}

In this case, $|R_1| \approx 1.27 > 1$. Consequently, the heuristic prediction $b_2 \approx \lfloor H_1 \rfloor$ fails to yield the correct partial quotient ($11 \neq 12$). This confirms that the strict stability proved in Section 2 is a specific structural property of cubic irrationals, whereas for higher degrees, the remainder term can exceed the unit threshold even for non-trivial denominators $q_n \geq 2$.

\section{Conclusion}

In this paper, we have provided a rigorous analytical justification for the stability of the Bombieri--van der Poorten formula when applied to algebraic irrationals $\alpha = \sqrt[m]{k}$ of degree $m \geq 3$. We demonstrated that the remainder term $R_n$ is not merely a negligible error but follows a predictable decay.

The stability criterion established in Theorem 2.1 confirms that for the cubic case ($m=3$), the condition $|R_n| < 1$ is strictly satisfied for all convergents with $q_n \geq 2$. This result is of significant importance for the methodologies presented in Sibbertsen et al. {\cite{Sibbertsen}} and Müller and Taktikos \cite{muller-taktikos}, as it eliminates the need for exhaustive numerical case-by-case verifications for small denominators. For higher degrees $m > 3$, we have shown that stability is maintained provided the denominator $q_n$ satisfies a degree-dependent lower bound, thereby extending the reach of effective Diophantine approximation constants to a broader class of algebraic roots.

Ultimately, the established inequalities for $b_{n+1}$ ensure that the partial quotients are uniquely constrained by their underlying algebraic forms within a tight Diophantine window. This provides a robust framework for further investigations into effective versions of Roth's Theorem and the algorithmic determination of continued fraction expansions for high-degree algebraic numbers.

\section{Acknowledgements}

Michael Taktikos and Noah Lebowitz-Lockard gave valuable feedback.

\end{document}